\documentclass[reqno]{amsart}

\usepackage{amsfonts}
\usepackage{amsmath}
\usepackage{amssymb}
\usepackage{amscd}
\usepackage{mathrsfs}  
\usepackage{amsbsy}
\usepackage{amsthm}
\usepackage{graphicx}
\usepackage{fancyhdr}
\usepackage{epsfig}
\usepackage{color}
\usepackage{xy}
\usepackage{enumitem}

\usepackage{CJKutf8}
\usepackage{inputenc}
\usepackage[encapsulated]{CJK}
\usepackage[CJK, overlap]{ruby}

\usepackage[OT2,OT1]{fontenc}  
\newcommand\cyr{%
\renewcommand\rmdefault{wncyr}%
\renewcommand\sfdefault{wncyss}%
\renewcommand\encodingdefault{OT2}%
\normalfont \selectfont} \DeclareTextFontCommand{\textcyr}{\cyr}

\newcommand{\be}{\begin{equation}}
\newcommand{\ee}{\end{equation}}

\newcommand{\bes}{\begin{equation*}}
\newcommand{\ees}{\end{equation*}}

\newcommand{\bH}{\mathbb{H}}
\newcommand{\K}{\mathbb{K}}
\newcommand{\N}{\mathbb{N}}
\newcommand{\bP}{\mathbb{P}}

\newcommand{\R}{\mathbb{R}}

\newcommand{\X}{\mathbb{X}}
\newcommand{\Y}{\mathbb{Y}}

\newcommand{\calH}{\mathcal{H}}

\newcommand{\mcC}{\mathcal{C}}

\newcommand{\cO}{\mathcal{O}}

\newcommand{\cU}{\mathcal{U}}

\newcommand{\st}{ : }

\newcommand{\sfB}{\mathsf{B}}

\newcommand{\sfX}{\mathsf{X}}
\newcommand{\sfY}{\mathsf{Y}}

\newcommand{\norm}[2]{{\lVert #1\rVert_{#2}}}

\newcommand{\bp}{{\boldsymbol{p}}} 
\newcommand{\bq}{{\boldsymbol{q}}} 
\newcommand{\wbp}{{\widetilde{\boldsymbol{p}}}} 
\newcommand{\wbq}{{\widetilde{\boldsymbol{q}}}} 
\newcommand{\bs}{{\boldsymbol{s}}} 
\newcommand{\bt}{\boldsymbol{t}} 
\newcommand{\cF}{\mathcal{F}}

\newcommand{\fF}{\mathfrak{F}}


\newcommand{\id}{\mathrm{id\,}}

\newcommand{\ml}{\vskip 5pt\noindent}

\newcommand{\oX}{\widetilde{X}}

\newcommand{\oPhi}{\widetilde{\Phi}}

\newcommand{\of}{\widetilde{f}}
\newcommand{\og}{\widetilde{g}}

\newcommand{\ou}{\widetilde{u}}
\newcommand{\ox}{\widetilde{x}}
\newcommand{\oK}{\widetilde{K}}
\newcommand{\oU}{\widetilde{U}}
\newcommand{\oV}{\widetilde{V}}
\newcommand{\opi}{\widetilde{\pi}}
\newcommand{\ob}{\widetilde{b}}
\newcommand{\ocalH}{\widetilde{\calH}}

\renewcommand{\rmdefault}{cmr} 
\renewcommand{\sfdefault}{cmr} 
\newtheorem{theorem}{Theorem}
\theoremstyle{plain}

\newtheorem{corollary}{Corollary}

\newtheorem{definition}{Definition}
\newtheorem{example}{Example}

\newtheorem{proposition}{Proposition}
\newtheorem{remark}{Remark}

\newtheorem*{notation*}{Notation}
\numberwithin{equation}{section}

\DeclareMathOperator\loc{{loc}}

\DeclareMathOperator\ord{ord}
\DeclareMathOperator\esssup{{ess\, sup}}
\DeclareMathOperator\cl{cl}
\DeclareMathOperator\inter{int}

\newcommand{\oR}{\overline{\R}}
\newcommand{\Rp}{\R_0^+}
\newcommand{\mydot}{\,\cdot\,}

\begin{document}

\title{Local Fractal Interpolation On\\ Unbounded Domains}
\author{Peter R. Massopust}
\thanks{Research partially supported by DFG grant MA5801/2-1}
\address{Centre of Mathematics, Research Unit M6, Technische Universit\"at M\"unchen, Boltzmannstrasse 3, 85747
Garching b. M\"unchen, Germany, and Helmholtz Zentrum M\"unchen,
Ingolst\"adter Landstrasse 1, 85764 Neuherberg, Germany}
\email{massopust@ma.tum.de}

\begin{abstract}
We define fractal interpolation on unbounded domains for a certain class of topological spaces and construct local fractal functions. In addition, we derive some properties of these local fractal functions, consider their tensor products, and give conditions for local fractal functions on unbounded domains to be elements of Bochner-Lebesgue spaces.
\vskip 12pt\noindent
\textbf{Keywords and Phrases:} Local iterated function system, attractor, fractal interpolation, Read-Bajraktarevi\'{c} operator, fractal function, Bochner-Lebesgue spaces, unbounded component, non-compact Hausdorff space
\vskip 6pt\noindent
\textbf{AMS Subject Classification (2010):} 28A80, 37C70, 41A05, 41A30, 42B35
\end{abstract}

\maketitle
\section{Introduction}\label{sec1}
Fractal interpolation is usually defined on compact Hausdorff spaces $X$ which translates to compact and bounded subsets when $X\subset \R^d$ is chosen. There are, however, situations where an unbounded domain may be warranted; one such scenario for $d:= 1$ involves sampling on the positive half line $\R^+$ to describe the long term asymptotic behavior of a system.

One can obtain fractal interpolation on unbounded domains $D$ of $\R$ in two ways: Firstly, one constructs a fractal interpolant $f$ on a compact subset, say the unit interval $I$, and then defines the pullback $f\circ j$ of $f$, where $j$ is a homeomorphism mapping $D$ onto $I$. Or, secondly, one defines a (global) iterated function system (IFS) on unbounded domains of $\R$ which amounts to writing the domain for the fractal interpolant as the union of bounded domains plus one unbounded domain. Both methods require that the unbounded domain is partitioned into one unbounded component and a finite number of bounded components.

In order to have more flexibility in the construction, the recently rediscovered concept of a local iterated function system can be used for fractal interpolation on unbounded domains. The more general structure of a local IFS allows the definition of mappings from proper subsets into a given compact subspace. This main focus of this paper lies in a construction of so-called local fractal functions on unbounded domains that is based on the structure of local iterated function systems. 

The outline of this paper is as follows. After some preliminary comments in Section \ref{sec1} about univariate fractal interpolation on unbounded domains in $\R$, we briefly introduce in Section \ref{sec2} the concepts of local iterated function system and local attractor. The next section provides the general setup for the construction of local fractal functions on unbounded domains in a certain type of topological space $X$. Section \ref{sec5} deals than with the construction itself using a Read-Bajraktarevi\'c (RB) operator acting on the Banach space of bounded functions over $X$. We also present a result that shows how Lagrange-type basis elements for these local fractal functions can be constructed. The tensor product of local fractal functions defined on unbounded domains is defined in Section \ref{sec6} and in Section \ref{sec7} we derive conditions for local fractal functions on unbounded domains to be elements of Bochner-Lebesgue spaces. Finally, we show in Section \ref{sec8} that the graph of a local fractal function on an unbounded domain is a local attractor of an associated local IFS.

Throughout, we use the following notation. The set of positive integers is denoted by $\mathbb{N} := \{1, 2, 3, \ldots\}$. For an $n\in \N$, we denote the initial segment $\{1, \ldots, n\}$ of $\N$ by $\N_n$. We write the closure of a set $S$ as $\cl S$ and its interior as $\inter S$. As usual, we define $x_+ := \max\{0, x\}$, $x\in \R$.
\section{Preliminary Remarks}\label{sec2}
Let us consider some of the different ways to extend fractal interpolation from a compact domain in $\R$ to an unbounded domain, say $\R_0^+ := [0,\infty)$. To this end, let $f$ be a continuous fractal function supported on $I:=[0,1]$ generated by the iterates of the Read-Bajractar\'evic (RB) operator $\Phi : C_0(I)\to C_0(I)$,
\be\label{eq1.1}
\Phi h = g + \sum_{i=1}^n s_i \, h\circ u_i^{-1}\,\chi_{u_i(I)},
\ee
where $g\in C_0(I) := \{v\in C(I) : v(0) = 0 = v(1)\}$ and $u_i :I\to u_i(I) =: I_i$ are homeomorphisms with $I = \bigcup_{i\in \N_n} I_i$ and $\inter{I_i} \cap \inter{I_j} = \emptyset$, $i\neq j$. The $s_i$ are real numbers with modulus less than one. This class of RB operators is for instance investigated in \cite{mas2}.
\subsection{Construction via pullbacks}
Denote by $\oR_0^+ := \R_0^+\cup\{\infty\}$ the extended real half-line, i.e., the Alexandroff compactification of $\R_0^+$. Any subset of $\oR_0^+$ which contains $\infty$ is called \emph{unbounded}. Suppose that we are given an arbitrary but fixed homeomorphism $j:\oR_0^+ \to I$. We define the pullback of $f$ by $j$, $f^* := f\circ j$, which is a continuous function from the unbounded domain $\oR_0^+\to \R$. Furthermore, since $f$ is the fixed point of \eqref{eq1.1} and $j$ a homeomorphism, one obtains the following self-referential equation for ${f}^*$:
\begin{align*}
(f\circ j)(x) = (g\circ j) (x) + \sum_{i=1}^n s_i \,(f\circ j) ((u_i\circ j)^{-1}(x))\,\chi_{(u_i\circ j) (\oR_0^+)} (x).
\end{align*}
If we denote the pullbacks of $g$ and $u_i$ by $j$ by ${g}^*$, respectively, ${u}_i^*$, the above equation can be rewritten as
\be\label{eq2.2}
{f}^* = {g}^* + \sum_{i=1}^n s_i \,{f}^*\circ ({u}_i^*)^{-1} \,\chi_{{u}_i^*(\oR_0^+)}.
\ee 
In other words, the pullback $f^*$ satisfies the same type of self-referential equation as $f$. Note that $(\oR_0^+, d)$ is a complete metric space where the metric $d$ is defined by $d := d_I (b(x), b(y)$ with $d_I$ being any metric on $I$ and $b: \oR_0^+\to I$ any bijection.

The unbounded domain $\oR_0^+$ is partitioned into $n$ subdomains $R_i$ such that $j(R_i) = I_i$, $i\in \N_n$. However, there is only one subdomain $R_k$, $k\in \N_n$, which contains $\infty$ and is therefore unbounded; the remaining $n-1$ subdomains are bounded. 

Define $g(\infty) := \lim_{x\to\infty} g(x)$, provided this limit exists. Notice that since $j(0), j(\infty)$ $\in \partial I$ and $f\in C_0(I)$, we have that $f(\infty) = 0$.

\begin{example}\label{ex1}

In Figure \ref{fig1} below, we depict one the left-hand side a fractal function generated by the above RB operator with $g (x) := (\frac12 - |x - \frac12|)_+$, $u_1 (x) := \frac{x}{2}$, $u_2(x) := \frac{x+1}{2}$, and $s_1 := \frac45$ and $s_2 := -\frac35$. Choosing $j (x) := (x+1)^{-1}$, we displayed the pullback $f^*$ of $f$ by $j$ on the right-hand side of Figure \ref{fig1}.

\begin{figure}[h!]
\begin{center}
\includegraphics[width=4cm, height=3cm]{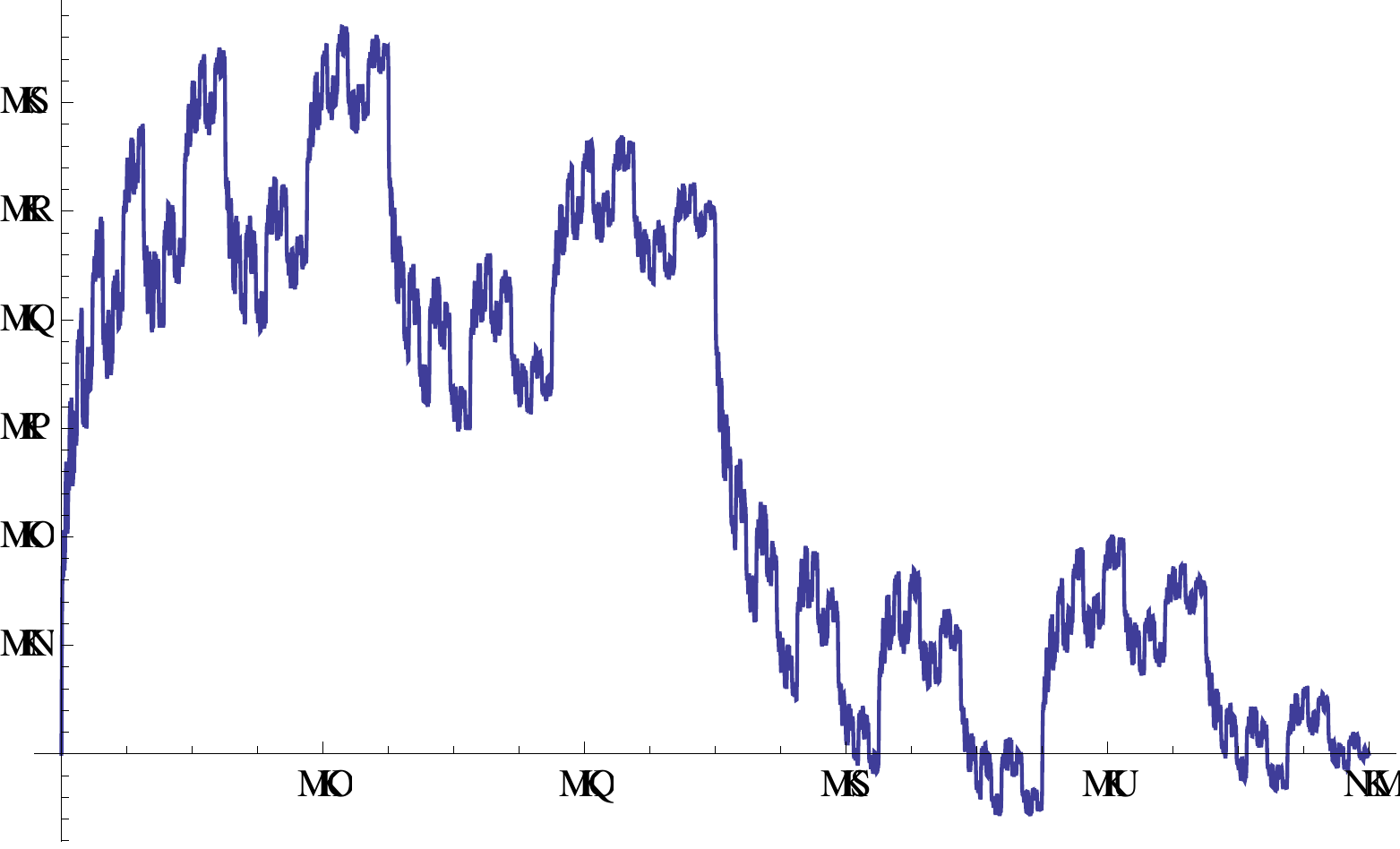}\hspace{2cm}\includegraphics[width=4cm, height=3cm]{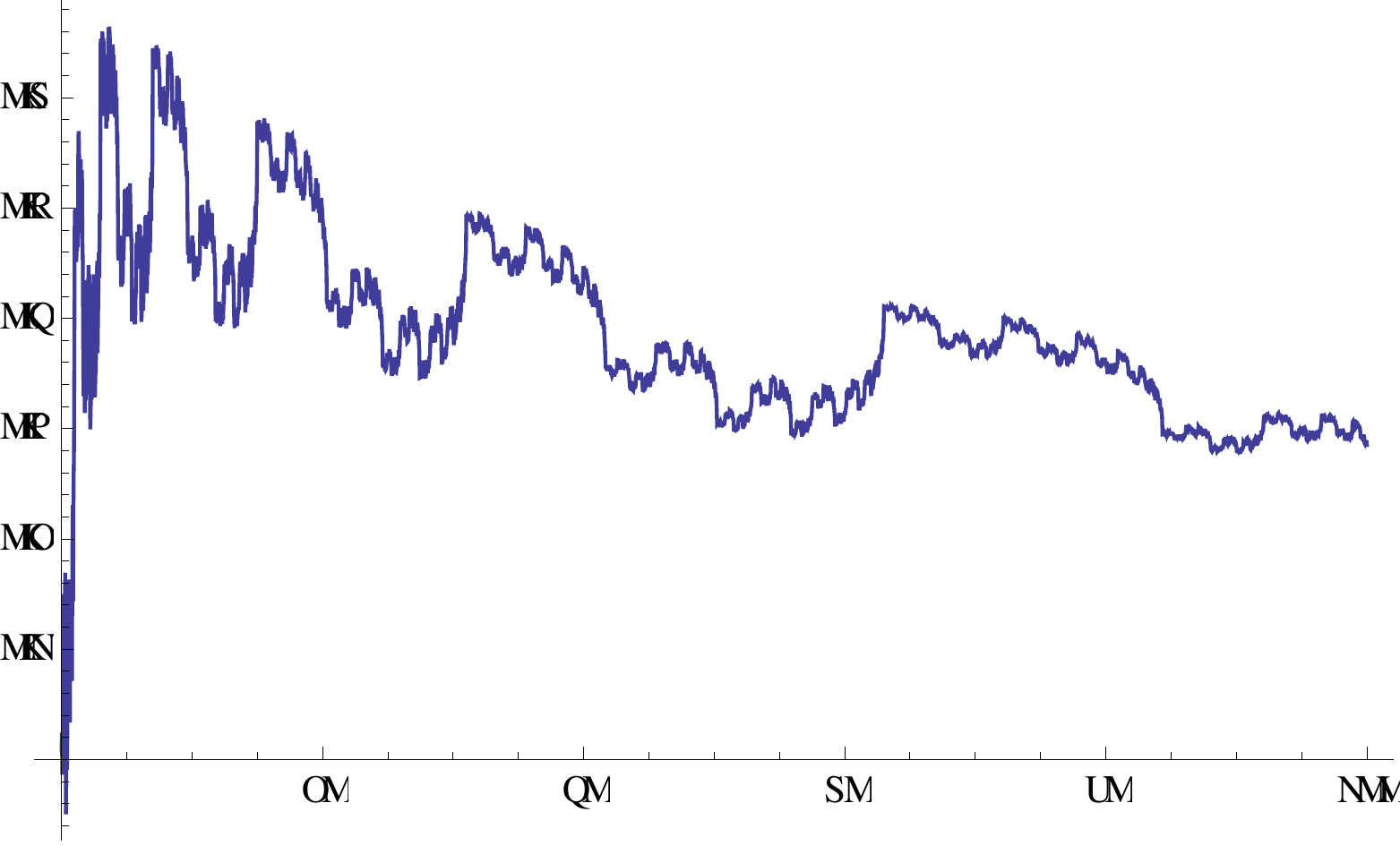}
\caption{A fractal function supported on $[0,1]$ (left) and its pullback supported on $\R_0^+$ (right).}\label{fig1}
\end{center}
\end{figure}
\noindent
Note the slow convergence of the pullback $f^*$ towards the asymptote $y = 0$, which reflects the slow convergence of $j$ towards zero as $x\to\infty$.
\end{example}

The aforementioned example and an examination of \eqref{eq2.2} show that the asymptotic behavior of the pullback $f^*$ is completely determined by the asymptotics of the homeomorphism $j$.  

\subsection{Construction via global IFSs} Recall that an iterated function system (IFS) on a complete metric space $(X, d)$ is a pair $(X, \cF)$ where $\cF$ is collection of continuous functions $\{f_i : X\to X\}_{i\in \N_n}$. In case $\cF$ consists entirely of contractions then the IFS $(X, \cF)$ is called \emph{hyperbolic} or \emph{contractive}. 

It is known that contractive IFSs have a unique attractor $A\in \bH(X)$, the hyperspace of nonempty compact subsets of $X$. This unique attractor is obtained as the fixed point of the set-valued mapping $\cF : \bH(X) \to \bH(X)$ defined by
\[
\cF (S) := \bigcup_{i\in \N_n} f_i (S).
\]
By a slight abuse of notation, we write $\cF$ for the IFS $(X, \cF)$, its collection of functions $\{f_i : X\to X\}_{i\in \N_n}$, and the above set-valued operator. 

For more details about IFSs and proofs, we refer the interested reader to the original papers \cite{BD,Hutch} or the monographs \cite{B,mas1}.

Let us again consider a special case, namely, $X := \R_0^+$. To be even more specific, we only consider the following exemplatory set-up, which nevertheless, reflects the general setting. 

To this end, let $u_1 : \Rp\to I$, $x\mapsto \frac{2}{\pi}\tan^{-1} x$, and $u_2 :\Rp\to\Rp\setminus [0,1)$, $x\mapsto x+1$. Then $\Rp = u_1 (\Rp) \cup u_2(\Rp)$ and the RB operator \eqref{eq1.1} now reads
\begin{align*}
\Phi h & = g + s_1 \, h\circ u_1^{-1}\,\chi_{u_1(\Rp)} + s_2 \, h\circ u_2^{-1}\,\chi_{u_2(\Rp)}\\
& = g + s_1 \, h\circ \tan \left(\frac{\pi}{2}\,\mydot\,\right)\,\chi_{I} + s_2 \, h (\,\mydot\, + 1)\,\chi_{[1,\infty)},
\end{align*}
where $g\in C_1 (\Rp) := \{v\in C(\Rp) : v(0) = 0 = \lim_{x\to\infty} v(x)\}$. In case $|s_1|, |s_2| < 1$, the fixed point of $\Phi$ is an element of $C_1 (\Rp)$, i.e., a continuous fractal function defined on the unbounded domain $\Rp$. Note that as in the case of the construction by pull-back, there is only one unbounded component, namely, $u_2 (\Rp)$. (If there were $n$ maps $u_i$ then these maps would define $n-1$ bounded and one unbounded component.) The graph of such a continuous fractal function is displayed in Figure \ref{fig2}. The function $g$ has been chosen as
\[
g (x) := \begin{cases} |x - \frac12| - \frac12, & x\in [0,2];\\ \frac{2}{x}, & x \geq 2, \end{cases}
\]
and $s_1 := \frac34$ and $s_2 := \frac{7}{10}$.

\begin{figure}[h!]
\begin{center}
\includegraphics[width=5cm, height=3cm]{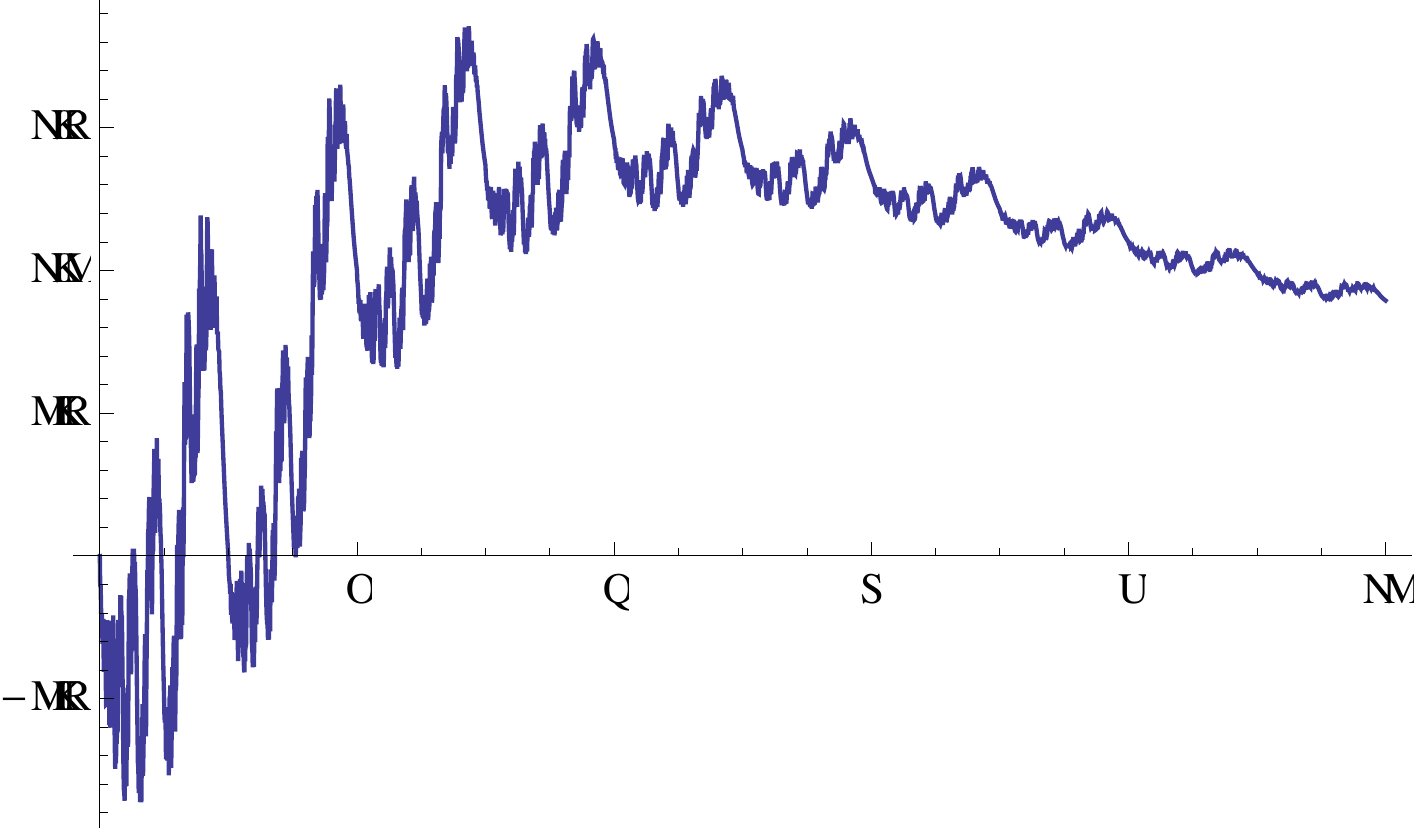}
\caption{A continuous fractal function supported on $\Rp$.}\label{fig2}
\end{center}
\end{figure}

\noindent
Notice that the rate of decay of $f$ for large values of $x$ is determined by that of $g$. For this example, we have $f \in \cO (x^{-1})$ as $x\to\infty$.

Both procedures to extend fractal interpolation to unbounded domains result in having the unbounded domain partitioned into one unbounded component and $n-1$ bounded components (in the case of $n$ maps $u_i$ and $\Rp$). If, for instance, $\R$ is used there will be two unbounded components and $n-2$ bounded components. In the latter case, one may map one unbounded component to the other adding a little flexibility to the construction. 

In the next section, we introduce the concept of a \emph{local} IFS and use it then Section \ref{sec4} to construct fractal functions on unbounded domains. As we will see, the locality of the IFS adds considerable flexibility to fractal interpolation on unbounded domains.
\section{Local Iterated Function Systems}\label{sec3}

The concept of \textit{local} iterated function system is a generalization of an iterated function system (IFS) and was first introduced in \cite{barnhurd} and reconsidered in \cite{BHM1}. Their properties have also been investigated in \cite{mas14,mas13}.

\begin{definition}\label{localIFS}
Suppose that $\{X_k \st k \in \N_m\}$ is a family of nonempty subsets of a Hausdorff space $X$. Further assume that for each $X_k$ there exists a continuous mapping $f_k: X_k\to X$, $k\in \N_m$. Then the pair $(X, \cF_{\loc})$, where $\cF_{\loc} := \{f_k : X_k\to X\}_{k\in \N_m}$, is called a \emph{local iterated function system (local IFS)}.
\end{definition}

Note that if each $X_k = X$, then Definition \ref{localIFS} coincides with the usual definition of a standard (global) IFS. However, the possibility of choosing the domain for each continuous mapping $f_k$ different from the entire space $X$ adds additional flexibility as will be recognized in the sequel. Also notice that one may choose the same $X_k$ as the domain for different mappings $f\in \cF_{\loc}$.

We can associate with a local IFS a set-valued operator $\cF_{\loc} : \bP(X) \to \bP(X)$, where $\bP(X)$ denotes the power set of $X$, by setting
\be\label{hutchop}
\cF_{\loc}(S) := \bigcup_{k\in\N_m} f_k (S\cap X_k).
\ee
By a slight abuse of notation, we use again the same symbol for a local IFS, its collection of functions, and its associated operator.

There exists an alternative definition for \eqref{hutchop}. For given functions $f_k$ that are only defined on
$X_k$, one could introduce set functions (also denoted by $f_k$) which are defined on $\bP(X)$ via
\[
f_k (S) := \begin{cases} f_k (S\cap X_k), & S\cap X_k\neq \emptyset;\\ \emptyset, & S\cap X_k = \emptyset,\end{cases}  \qquad k\in \N_m.
\]
On the left-hand side of the above equation, $f_k (S\cap X_k)$ is the set of values of the original $f_k$ as in the previous definition. This extension of a given function $f_k$ to sets $S$ which include elements which are not in the domain of $f_k$ basically just ignores those elements. In the following we use this definition of the set functions $f_k$. 

\begin{definition}
A subset $A\in \bP(X)$ is called a \emph{local attractor} for the local IFS $(X, \cF_{\loc})$ if
\be\label{attr}
A = \cF_{\loc} (A) = \bigcup_{k\in \N_m} f_k (A\cap X_k).
\ee
\end{definition}
In \eqref{attr} it is allowed that $A\cap X_k$ is the empty set. Thus, every local IFS has at least one local attractor, namely $A = \emptyset$. However, it may also have many distinct ones. In the latter case, if $A_1$ and $A_2$ are distinct local attractors, then $A_1\cup A_2$ is also a local attractor. Hence, there exists a largest local attractor for $\cF_{\loc}$, namely the union of all distinct local attractors. We refer to this largest local attractor as {\em the} local attractor of a local IFS $\cF_{\loc}$. For more details about local attractors and their relation to the global attractor, the interested reader may consult \cite{BHM1,mas14}

\section{General Setup For Unbounded Domains}\label{sec4}

Let $X$ be a topological space and suppose $K\subset X$ is a compact subspace, i.e., an element of the hyperspace $\K(X)$ of all compact subsets of $X$. We denote the family of connected components of $X\setminus K$ by $\mcC(X\setminus K)$. An element $B\in \mcC(X\setminus K)$ is called {\em bounded} if its closure is compact; otherwise {\em unbounded}. Define
\[
\widehat{K} := X \setminus \bigcup\left\{U\in \mcC(X\setminus K)\st \text{$U$ is unbounded}\right\}.
\]
For the following, we require a result whose proof can be found in \cite[Lemma 9]{BE}.
\begin{proposition}\label{prop3}
Let $X$ be a connected, non-compact, locally connected, locally compact Hausdorff space. Let $K\subset X$ be a compact subspace. Then $X\setminus K$ has only finitely many unbounded components and $\widehat{K}$ is compact.
\end{proposition}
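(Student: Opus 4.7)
The plan is to pick a sufficiently nice compact neighborhood of $K$ and exploit the compactness of its topological boundary.

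First I would use local compactness, the Hausdorff property, and compactness of $K$ to obtain an open set $V$ with $K\subset V$ and $\overline{V}$ compact; this is standard (cover $K$ by finitely many precompact open neighborhoods and take their union). Then $\partial V=\overline{V}\setminus V$ is compact and, since $K\subset V$, contained in $X\setminus K$. Local connectedness implies that every component $U$ of the open set $X\setminus K$ is itself open in $X$, so the complement of $\widehat{K}$ (a union of unbounded components) is open, i.e., $\widehat{K}$ is closed.

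The structural step I would establish is the following dichotomy: every component $U\in\mcC(X\setminus K)$ either meets $\partial V$ or is entirely contained in $V$. If $U\cap\partial V=\emptyset$, then $U$ is the disjoint union of the relatively open sets $U\cap V$ and $U\cap(X\setminus\overline{V})$, so by connectedness of $U$ one of them is empty. To exclude $U\subset X\setminus\overline{V}$ one uses that $\overline{U}\cap K\neq\emptyset$ whenever $K\neq\emptyset$ (the case $K=\emptyset$ is trivial, as then $\widehat{K}=\emptyset$): indeed, if $\overline{U}$ missed $K$ entirely, then $\overline{U}\subset X\setminus K$, combined with $U$ being closed in $X\setminus K$, would yield $U=\overline{U}$, making $U$ clopen in the connected space $X$ and forcing $U=\emptyset$. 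But $U\subset X\setminus\overline{V}$ would give $\overline{U}\cap V=\emptyset$ and hence $\overline{U}\cap K=\emptyset$, a contradiction. From this dichotomy, unbounded components must meet $\partial V$: containment in $V$ would force $\overline{U}\subset\overline{V}$ compact, contradicting unboundedness.

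Now consider the collection $\{U\cap\partial V:U\in\mcC(X\setminus K),\ U\cap\partial V\neq\emptyset\}$. These are pairwise disjoint nonempty open subsets of $\partial V$ whose union is all of $\partial V$, and $\partial V$ is compact; any finite subcover of a disjoint cover must include every member, so the collection is finite. In particular there are only finitely many unbounded components, giving the first claim.

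For compactness of $\widehat{K}$, let $U_{j_1},\dots,U_{j_r}$ denote the \emph{bounded} components of $X\setminus K$ that meet $\partial V$; by the previous paragraph there are only finitely many. By the dichotomy, every other bounded component lies inside $V$. Hence
\[
\widehat{K}\;\subset\;\overline{V}\cup\overline{U_{j_1}}\cup\cdots\cup\overline{U_{j_r}},
\]
a finite union of compact sets and therefore compact; since $\widehat{K}$ is closed, it is compact. The main obstacle I expect is pinning down the dichotomy correctly: one must invoke connectedness of $X$ (not merely of $U$) to prevent a component from ``escaping'' on the far side of $\overline{V}$, and this is exactly where the hypothesis that $X$ is connected enters the argument.
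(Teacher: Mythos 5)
Your argument is correct. Note that the paper does not prove Proposition \ref{prop3} at all --- it simply cites \cite[Lemma 9]{BE} --- so there is no internal proof to compare against; what you have written is essentially the standard argument underlying that reference: choose a precompact open neighborhood $V\supset K$, observe that local connectedness makes the components of $X\setminus K$ open, show via connectedness of $X$ that every component either meets the compact frontier $\partial V$ or lies in $V$ (hence is bounded), and conclude by disjointness that only finitely many components can meet $\partial V$, with $\widehat{K}$ then trapped in a finite union of compact closures. All the delicate points are handled properly: you use Hausdorffness to make $K$ closed and $\overline{V}$ compact, connectedness of $X$ to rule out a component sitting entirely beyond $\overline{V}$, and you correctly isolate the degenerate case $K=\emptyset$ (where $\partial V$ could be empty and the dichotomy would otherwise fail). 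This is a complete, self-contained proof of the statement the paper outsources.
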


As an example of a topological space satisfying the conclusions of Proposition \ref{prop3}, we mention $X:=\R^s$, $s\in \N$. We also note that the existence of unbounded components is connected to the existence of ends in topological spaces. The fact that $X :=\R$ has two unbounded components relates to $X$ having two ends $\pm\infty$. For more details, we refer the interested reader to \cite[Section 13.4]{G}.
\ml\noindent
We now list the assumptions for the remainder of this paper. 
\ml\noindent
\textbf{General Setup}: 
\begin{enumerate}[leftmargin=24pt]
\item[(i)] $X$ is a nonempty connected, non-compact, locally connected, locally compact Hausdorff space. 
\item[(ii)] $K\subset X$ is a compact connected subspace such that $\mcC (X\setminus K)$ contains no bounded components. Denote by $\cU (X\setminus K):=\{U_i \st i\in \N_n\}$, $n\in \N$, the finite collection of unbounded components of $X\setminus K$. 
\item[(iii)] $\{K_j \st j \in\N_m\}$ is a family of (not necessarily distinct) compact connected subspaces of $K$. The collection of unbounded components of $X\setminus K_j$ is denoted by $\cU_j (X\setminus K_j) :=\{U_{j,k}\st k = 1, \ldots, r_j\}$, $r_j\in \N$, $j\in \N_m$. 
\item[(iv)] $\cU_n$ is an $n$--element subset of $\bigcup \cU_j(X\setminus K_j)$. Let $\{V_1, \ldots, V_n\}$ be the $n$ elements of $\cU_n$. 
\item[(v)] Let $\pi:\N_n\to\N_n$ be a fixed permutation. 
\item[(vi)] For each $i\in \N_n$ and each $j\in \N_m$, let $u_i :V_i \to U_{\pi(i)}$ and $b_j : K_j\to K$ be homeomorphisms. 
\item[(vii)] The family of homeomorphisms 
\be\label{H}
\calH := \{b_j : K_j\to K \st j \in \N_m\}\cup \{u_i : V_i\to U_{\pi(i)} \st i\in \N_n\} 
\ee
is required to satisfy the following two conditions:
\ml
\begin{enumerate}[leftmargin=25pt]
\item[(P1)] $K = \bigcup_{j=1}^m b_j(K_j)$ and $\forall\,j\neq j'\in \N_m : b_j(\inter{K}_j)\cap b_{j'}(\inter{K}_{j'}) = \emptyset$;
\item[(P2)]  $X\setminus K = \bigcup_{i=1}^n u_i(V_i)$ and
$\forall i\neq i'\in \N_n: \,u_i(\inter{V}_i)\cap u_{i'}(\inter{V}_{i'}) = \emptyset$.
\end{enumerate}
\end{enumerate}

\begin{remark}
In case $K = \emptyset$, $m = 1$, $V_1 = X$, and $\pi = \id$. The set of mappings $\{b_j\} = \emptyset$ and the family $\calH = \{u_i : X\to X \st i\in \N_n\}$ of homeomorphisms is only required to satisfy condition \emph{(P2)}.
\end{remark}

\begin{remark}
Note that the requirement on $K$ implies that $\widehat{K} = K$. Also, notice that $\sum r_j \geq m$ since every $K_j$ has at least one unbounded component. 
\end{remark}

\begin{example}
In every topological vector space of dimension $\geq 2$, in particular in every metric or normed space of dimension $\geq 2$, the complement of a bounded set has exactly one unbounded component \cite{klee}. 
\end{example}

%
%
%
\section{Local Fractal Functions on Unbounded Domains}\label{sec5}
In this section, we define local fractal functions on $X$. They extend in a natural way the (global) fractal interpolation functions first introduced in \cite{barninterp} and investigated in, for instance, \cite{GHM,mas1,M97,mas2}. An, albeit incomplete, list of references for local fractal functions is \cite{BHM1,mas14,mas13}. 

To this end, suppose that $(\sfY, \norm{\mydot}{\sfY})$ is a Banach space. Denote by $\sfB(X, \sfY)$ the set
\[
\sfB(X, \sfY) := \{f : X\to \sfY \st \text{$f$ is bounded}\}.
\]
Recall that a function $f : X\to \sfY$ is called bounded (with respect to $\|\mydot\|_\sfY$), if there exists an $M>0$ so that $\|f(x_1) - f(x_2)\|_\sfY < M$, for all $x_1,x_2\in \X$. Under the usual definition of addition and scalar multiplication of mappings, and endowed with the norm 
\[
\norm{f - g}{}: = \displaystyle{\sup_{x\in X}} \,\norm{f(x) - g(x)}{\sfY},
\] 
$(\sfB(X, \sfY), \norm{\mydot}{})$ becomes a Banach space.

For $j \in \N_m$ and $i\in \N_n$, let $v_j: K_j\times \sfY \to \sfY$ and $w_i: V_i\times \sfY \to \sfY$ be mappings that are uniformly contractive in the second variable, i.e., there exist $\ell_1, \ell_2\in [0,1)$ so that for all $y_1, y_2\in \sfY$
\begin{subequations}
\begin{gather}
\norm{v_j (x, y_1) - v_j(x, y_2)}{\sfY} \leq \ell_1\, \norm{y_1 - y_2}{\sfY}, \quad\forall x\in K_j,\label{scon1}\\
\norm{w_i (x, y_1) - w_i(x, y_2)}{\sfY} \leq \ell_2\, \norm{y_1 - y_2}{\sfY}, \quad\forall x\in V_i\label{scon2}.
\end{gather}
\end{subequations}
Define a \emph{Read-Bajactarevi\'c (RB) operator} $\Phi: B(X,\sfY)\to \sfY^{X}$ by
\begin{align}\label{RB}
\Phi f (x) & := \sum_{j=1}^m v_j (b_j^{-1} (x), f_j\circ b_j^{-1} (x))\,\chi_{b_j(K_j)}(x)\nonumber\\
&\; + \sum_{i=1}^n w_i ((u_i^{-1} (x), f_i\circ u_i^{-1} (x))\,\chi_{u_i (V_i)}(x), 
\end{align}
where $f_i := f\vert_{V_i}$ and $f_j := f\vert_{K_j}$ denote the restrictions of $f$ to $V_i$, respectively, $K_j$, and $\chi_M$ denotes the characteristic function of a set $M$. Note that $\Phi$ is well-defined, and since $f$ is bounded and each $v_j$ and $w_i$ is contractive in its second variable, $\Phi f\in \sfB(X,\sfY)$.

Moreover, by \eqref{scon1} and \eqref{scon2}, we obtain for all $f,g\in \sfB(X, \sfY)$ the following inequality:
\begin{align}\label{estim}
\norm{(\Phi f - \Phi g}{} &= \sup_{x\in X} \norm{\Phi f (x) - \Phi g (x)}{\sfY}\nonumber\\
& \leq \sup_{x\in X} \norm{v(b_j^{-1} (x), f_j (u_j^{-1} (x))) - v(b_j^{-1} (x), g_j (b_j^{-1} (x)))}{\sfY}\nonumber\\
 & + \sup_{x\in X} \norm{w(u_i^{-1} (x), f_i (u_i^{-1} (x))) - w(u_i^{-1} (x), g_i (u_i^{-1} (x)))}{\sfY}\nonumber\\
& \leq \ell_1\sup_{x\in X} \norm{f_j\circ b_j^{-1} (x) -  g_j \circ b_j^{-1} (x)}{\sfY} \nonumber\\
& + \ell_2\sup_{x\in X} \norm{f_i\circ u_i^{-1} (x) -  g_i \circ u_i^{-1} (x)}{\sfY} \nonumber\\
& \leq \max\{\ell_1, \ell_2\}\, \norm{f - g}.
\end{align}
Above, we set $v(x,y):= \sum_{j=1}^m v_j (x, y)\,\chi_{K_j}(x)$ and $w(x,y):= \sum_{i=1}^n w_i (x, y)\,\chi_{V_i}(x)$ to simplify notation. 

These arguments lead immediately to the following theorem.
\begin{theorem}\label{thm2}
Let $(\sfY, d_\sfY)$ be a Banach space and let $X$, $\{K_j\}$, $\{V_i\}$, and $\calH :=$ $ \{b_j : K_j\to K \st j \in \N_m\}\cup \{u_i : V_i\to U_{\pi(i)} \st i\in \N_n\} $ be as in the General Setup. Let the mappings $v_j: K_j\times \sfY \to \sfY$, $j\in \N_m$, and $w_i: V_i\times \sfY \to \sfY$, $i\in \N_n$, satisfy \eqref{scon1} and \eqref{scon2}, respectively. Then the RB operator $\Phi$ defined by \eqref{RB} is a contraction on $\sfB(X, \sfY)$. Its unique fixed point $f$ satisfies the \emph{self-referential equation}
\begin{align}
f (x) & := \sum_{j=1}^m v_j (b_j^{-1} (x), f_{\Phi,j}\circ b_j^{-1} (x))\,\chi_{b_j(K_j)}(x)\nonumber\\
&\; + \sum_{i=1}^n w_i ((u_i^{-1} (x), f_{\Phi, i}\circ u_i^{-1} (x))\,\chi_{u_i (V_i)}(x),
\end{align}
where $f_{i} := f\vert_{V_i}$ and $f_{j} := f\vert_{K_j}$.
\end{theorem}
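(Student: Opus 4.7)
The plan is to reduce Theorem \ref{thm2} to the Banach fixed point theorem on the complete metric space $(\sfB(X,\sfY),\|\cdot\|)$. This requires two verifications: (a) $\Phi$ maps $\sfB(X,\sfY)$ into itself, and (b) $\Phi$ is a strict contraction with Lipschitz constant $s := \max\{\ell_1,\ell_2\}<1$.

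For (a) I would observe that, by properties (P1) and (P2) of the General Setup, the interiors of the image sets $\{b_j(K_j)\}$ and $\{u_i(V_i)\}$ tile $K$ and $X\setminus K$ respectively, and that $K$ and $X\setminus K$ are disjoint; hence for a typical $x\in X$ exactly one of the two sums in \eqref{RB} contributes, and only through a single term. Uniform contractivity in the second variable yields the linear growth bound $\|v_j(x,y)\|_\sfY\leq\|v_j(x,0)\|_\sfY+\ell_1\|y\|_\sfY$, and analogously for $w_i$, which transfers boundedness of $f$ to boundedness of $\Phi f$.

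For (b) I would carry out the estimate that is essentially sketched in \eqref{estim}. Given any $x\in X$, write it either as $b_j(\xi)$ for some $\xi\in K_j$ or as $u_i(\eta)$ for some $\eta\in V_i$; in the former case \eqref{scon1} bounds the active summand of $\Phi f(x)-\Phi g(x)$ in $\sfY$-norm by $\ell_1\|f(\xi)-g(\xi)\|_\sfY\leq\ell_1\|f-g\|$, and in the latter case \eqref{scon2} gives the analogous bound with $\ell_2$. Taking the supremum over $x$ yields $\|\Phi f-\Phi g\|\leq s\|f-g\|$. The Banach fixed point theorem then produces a unique $f\in\sfB(X,\sfY)$ with $\Phi f=f$, and reading this identity pointwise through \eqref{RB} is exactly the stated self-referential equation.

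The step I expect to be the main obstacle is handling the boundary overlap. On the set where two of the images $b_j(K_j),\,b_{j'}(K_{j'})$ (or two $u_i(V_i),\,u_{i'}(V_{i'})$) meet, the corresponding characteristic functions in \eqref{RB} are simultaneously $1$, so $\Phi f(x)$ is a genuine sum of several contributions and a naive triangle inequality would multiply the Lipschitz constant by the number of overlapping branches and destroy the contraction. The natural remedy, already tacit in the computation \eqref{estim}, is a compatibility condition forcing the branches $v_j$ (and $w_i$) to agree on the overlaps, so that only one effective term contributes at every $x$; stating this condition explicitly as part of the hypotheses is what makes the supremum bound legitimate and closes the contraction argument cleanly.
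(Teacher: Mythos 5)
Your proposal follows essentially the same route as the paper: the estimate \eqref{estim} shows that $\Phi$ maps $\sfB(X,\sfY)$ into itself and is a contraction with constant $\max\{\ell_1,\ell_2\}<1$, and the Banach fixed point theorem then yields the unique fixed point, whose self-referential equation is simply $\Phi f=f$ read pointwise through \eqref{RB}. Your closing remark about overlaps is well taken and is in fact a point the paper glosses over: conditions (P1)--(P2) only make the \emph{interiors} of the images $b_j(K_j)$ and $u_i(V_i)$ disjoint, so on common boundaries several characteristic functions in \eqref{RB} equal $1$ simultaneously and the computation \eqref{estim} (which tacitly retains a single active term) requires either a compatibility condition on the branches or a replacement of the images by a genuine partition; flagging this explicitly strengthens rather than alters the argument.
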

\begin{proof}
It follows directly from \eqref{estim} that $\Phi$ is a contraction on the Banach space $\sfB(X,\sfY)$ and, by the Banach Fixed Point Theorem, has therefore a unique fixed point $f$ in $\sfB(X,\sfY)$. The self-referential equation for $f$ is a direct consequence of \eqref{RB}.
\end{proof}
\noindent
We refer to this unique fixed point as a \emph{bounded local fractal function with unbounded domain $X$}. Note that  $f$ depends on the form of $\Phi$, i.e., the sets of functions $\{b_j \st j\in \N_m\}$, $\{u_i\st i\in \N_n\}$, $\{v_j\st j\in \N_m\}$, and $\{w_i\st i\in \N_n\}$. Unless necessary, we usually suppress these dependencies.

Next, we would like to consider special choices for the mappings $v_j$ and $w_i$. For this purpose, suppose that $p_j\in \sfB(K_j,\sfY)$, $q_i\in \sfB(V_i,\sfY)$, and that $s_j:K_j\to\R$ and $t_i:V_i\to\R$ are bounded functions. Then, we define
\begin{align}
v_j (x,y) &:= p_j (x) + s_j (x) \,y,\quad j\in \N_m,\label{specialv}\\
w_i(x,y) &:= q_i (x) + t_i (x) \,y,\quad i\in \N_n\label{specialw}.
\end{align}

The mappings $v_j$ and $w_i$ given by \eqref{specialv} and \eqref{specialw} satisfy conditions \eqref{scon1} and \eqref{scon2}, respectively, provided that the functions $s_j$ are bounded on $K_j$ with bounds in $[0,1)$ and the functions $t_i$ are bounded on $V_i$ also with bounds in $[0,1)$. For then, for a fixed $x\in K_j$, 
\begin{align*}
\norm{v_j (x, y_1) - v_j(x, y_2)}{\sfY} = \norm{s_j(x) (y_1 - y_2)}{\sfY} &\leq \|s_j\|_{\infty, K_j}\,\norm{y_1 - y_2}{\sfY}\\
& \leq s \,\norm{y_1 - y_2}{\sfY}.
\end{align*}
Here, we denoted the supremum norm with respect to $K_j$ by $\|\mydot\|_{\infty, K_j}$, and set $s := \max\{\|s_j\|_{\infty,K_j}\st$ $j\in \N_m\}$. Similarly, we obtain for the $w_i$ the estimate
\[
\norm{w_i (x, y_1) - w_i(x, y_2)}{\sfY} \leq t \,\norm{y_1 - y_2}{\sfY}, 
\]
with $t := \max\{\|t_i\|_{\infty,V_i}\st$ $i\in \N_n\}$.

For {\em fixed} sets of mappings $\{p_j\}$, $\{q_i\}$ and functions $\{s_j\}$, $\{t_i\}$, the associated RB operator \eqref{RB} has now the form
\begin{align*}
\Phi f & = \sum_{j=1}^m p_j\circ b_j^{-1} \,\chi_{b_j (K_j)} + \sum_{j=1}^m (s_j\circ b_j^{-1})\cdot (f_j\circ b_j^{-1})\;\chi_{b_j(K_j)}\\
& + \sum_{i=1}^n q_i\circ u_i^{-1} \,\chi_{u_i (V_i)} + \sum_{i=1}^n (t_i\circ u_i^{-1})\cdot (f_i\circ u_i^{-1})\;\chi_{u_i(V_i)}
\end{align*}
or, equivalently,
\begin{align*}
\Phi f_j\circ b_j & = p_j + s_j\cdot f_j, \quad \text{on $K_j$, $\quad\forall\;j\in\N_m$},\\
\Phi f_i\circ u_i & = q_i + t_i\cdot f_i,  \quad \text{on $V_i$, $\quad\forall\;i\in\N_n$}.
\end{align*}

To simplify notation, we set 
\begin{gather*}
\bp := (p_1, \ldots, p_m) \in \sfB^m_\sfY :=\underset{j=1}{\overset{m}{\times}} \sfB(K_j,\sfY),\\
 \bq := (q_1, \ldots, q_n) \in \sfB^n_\sfY := \underset{i=1}{\overset{n}{\times}} \sfB(V_i,\sfY),\\
 \bs := (s_1, \ldots, s_m)\in \sfB^m_\R := \underset{j=1}{\overset{m}{\times}} \sfB (K_j,\R),\\
 \bt := (t_1, \ldots, t_n)\in \sfB^n_\R := \underset{i=1}{\overset{n}{\times}} \sfB (V_i,\R). 
\end{gather*}
\noindent
Thus, we have in summary the following result.
\begin{theorem}
Let $(\sfY, d_\sfY)$ be a Banach space and let $X$, $\{K_j\}$, $\{V_i\}$, and $\calH :=$ $ \{b_j : K_j\to K \st j \in \N_m\}\cup \{u_i : V_i\to U_{\pi(i)} \st i\in \N_n\} $ be as in the General Setup. Let $\bp\in \sfB^m_\sfY$, $\bq\in \sfB^n_\sfY$, $\bs\in \sfB^m_\R$ and $\bt\in \sfB^n_\R$. 

Define a mapping $\Phi: \sfB^m_\sfY \times \sfB^n_\sfY \times \sfB^m_\R \times \sfB^n_\R \times \sfB(X,\sfY)\to \sfB(X,\sfY)$ by
\begin{align}\label{eq3.4}
\Phi (\bp)(\bq)(\bs)(\bt) f & = \sum_{j=1}^m p_j\circ b_j^{-1} \,\chi_{b_j (K_j)} + \sum_{j=1}^m (s_j\circ b_j^{-1})\cdot (f_j\circ b_j^{-1})\;\chi_{b_j(K_j)}\nonumber\\
& + \sum_{i=1}^n q_i\circ u_i^{-1} \,\chi_{u_i (V_i)} + \sum_{i=1}^n (t_i\circ u_i^{-1})\cdot (f_i\circ u_i^{-1})\;\chi_{u_i(V_i)}.
\end{align}
If $\max\{\max\{\|s_j\|_{\infty,K_j}\st j\in \N_m\}, \max\{\|t_i\|_{\infty,V_i}\st i\in \N_m\}\} < 1$, then the operator $\Phi (\bp)(\bq)(\bs)(\bt)$ is contractive on $\sfB(X, \sfY)$ and its unique fixed point $f$ satisfies the self-referential equation
\begin{align}\label{3.4}
f & = \sum_{j=1}^m p_j\circ b_j^{-1} \,\chi_{b_j (K_j)} + \sum_{j=1}^m (s_j\circ b_j^{-1})\cdot (f_{\Phi,j}\circ b_j^{-1})\;\chi_{b_j(K_j)}\nonumber\\
& + \sum_{i=1}^n q_i\circ u_i^{-1} \,\chi_{u_i (V_i)} + \sum_{i=1}^n (t_i\circ u_i^{-1})\cdot (f_{\Phi,i}\circ u_i^{-1})\;\chi_{u_i(V_i)}
\end{align}
or, equivalently,
\begin{align*}
f_{j}\circ b_j & = p_j + s_j\cdot f_{j}, \quad \text{on $K_j$, $\quad\forall\;j\in\N_m$},\\
f_{i}\circ u_i & = q_i + t_i\cdot f_{i},  \quad \text{on $V_i$, $\quad\forall\;i\in\N_n$},
\end{align*}
where $f_{i} := f\vert_{V_i}$ and $f_{j} := f\vert_{K_j}$.
\end{theorem}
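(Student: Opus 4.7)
The plan is to realize the theorem as a direct specialization of Theorem~\ref{thm2}. The affine mappings $v_j(x,y) := p_j(x) + s_j(x)\,y$ and $w_i(x,y) := q_i(x) + t_i(x)\,y$ fall within the class handled by that theorem, so it suffices to verify the uniform Lipschitz conditions \eqref{scon1} and \eqref{scon2} with appropriate contraction constants, and then to unpack the global self-referential identity into its local restrictions on $K_j$ and $V_i$.

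For the Lipschitz estimate, I would set $s := \max\{\|s_j\|_{\infty,K_j}\st j\in \N_m\}$ and $t := \max\{\|t_i\|_{\infty,V_i}\st i\in \N_n\}$, and compute, for $x \in K_j$ and $y_1,y_2 \in \sfY$,
\[
\|v_j(x,y_1) - v_j(x,y_2)\|_\sfY = |s_j(x)|\,\|y_1-y_2\|_\sfY \leq s\,\|y_1-y_2\|_\sfY,
\]
which is \eqref{scon1} with $\ell_1 = s$; an identical argument gives \eqref{scon2} with $\ell_2 = t$. By hypothesis $\max\{s,t\} < 1$. Boundedness of $p_j \in \sfB(K_j,\sfY)$, $q_i \in \sfB(V_i,\sfY)$ and of the scalar factors $s_j$, $t_i$ ensures that each summand in \eqref{eq3.4} is bounded on its support, so $\Phi(\bp)(\bq)(\bs)(\bt)$ is a well-defined self-map of $\sfB(X,\sfY)$. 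Theorem~\ref{thm2} then supplies contractivity with factor $\max\{s,t\}$, a unique fixed point $f \in \sfB(X,\sfY)$, and the self-referential identity \eqref{3.4}.

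To derive the equivalent local form, I would evaluate \eqref{3.4} at $b_j(x)$ for $x \in K_j$ and at $u_i(x)$ for $x \in V_i$. Property (P1), together with the separation $b_j(K_j) \subset K$ and $u_i(V_i) \subset X\setminus K$, ensures that the characteristic functions isolate the $j$-th summand of the first two sums (and annihilate the last two), yielding $f_j \circ b_j = p_j + s_j \cdot f_j$ on $K_j$; the symmetric use of (P2) gives $f_i \circ u_i = q_i + t_i \cdot f_i$ on $V_i$.

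There is no genuine obstacle: the theorem is essentially a corollary of Theorem~\ref{thm2}, and all the analytic content has already been established. The only minor point requiring attention is the behavior of the characteristic functions on the boundary overlaps permitted by the disjoint-\emph{interior} clauses in (P1) and (P2); this ambiguity does not affect the pointwise identity on $K_j$ and $V_i$ that the theorem asserts, since the $j$-th and $i$-th restrictions $f_j$ and $f_i$ are by definition the values of the fixed point $f$ on the corresponding subsets.
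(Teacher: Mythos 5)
Your proposal is correct and follows essentially the same route as the paper: the paper's proof simply invokes "the preceding arguments" (namely the verification that the affine maps $v_j(x,y)=p_j(x)+s_j(x)\,y$ and $w_i(x,y)=q_i(x)+t_i(x)\,y$ satisfy \eqref{scon1} and \eqref{scon2} with constants $s$ and $t$) together with Theorem~\ref{thm2}. Your additional remarks on unpacking the local form via (P1)/(P2) and on the boundary overlaps are sound elaborations of details the paper leaves implicit.
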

\begin{proof}
The statements follow directly from the preceding arguments and Theorem \ref{thm2}.
\end{proof}

As above, we refer to $f$ as a bounded local fractal function with unbounded domain $X$.
\begin{remark}
The local fractal function $f$ generated by the operator $\Phi$ defined by \eqref{eq3.4} does not only depend on the families of subsets $\{K_j \st j \in \N_m\}$ and $\{V_i \st i \in \N_n\}$ but also on the four tuples of bounded mappings $\bp\in \sfB^m_\sfY$, $\bq \in \sfB^n_\sfY$, $\bs\in \sfB^m_\R$ and $\bt\in \sfB^n_\R$.  The fixed point $f$ should therefore be written more precisely as $f (\bp,\bq,\bs,\bt)$. However, for the sake of notational simplicity, we usually suppress this dependence for both $f$ and $\Phi$ when not necessary.
\end{remark}

The following result found in \cite{GHM} and, in more general form, in \cite{M97} is the extension to the present setting of local fractal functions on unbounded domains.
\begin{theorem}\label{thm3.3}
Suppose that the tuples $\bs$ and $\bt$ are fixed. The mapping $\Theta: \sfB^m_\sfY\times\sfB^n_\sfY\to\sfB(X,\sfY)$, $(\bp,\bq) \mapsto f(\bp,\bq)$ defines a linear isomorphism.
\end{theorem}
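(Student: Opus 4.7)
My overall strategy is to verify linearity directly from the algebraic structure of \eqref{eq3.4} combined with uniqueness of the fixed point, and then to produce an explicit linear inverse to $\Theta$ by reading off the equivalent pointwise form of the self-referential equation. Since the inverse will be explicit, both injectivity and surjectivity reduce to two algebraic identities between $\Theta$ and the candidate inverse.

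\textbf{Linearity.} The right-hand side of \eqref{eq3.4} is jointly linear in the triple $(\bp,\bq,f)$ when the scaling tuples $(\bs,\bt)$ are held fixed, since every summand multiplies exactly one of $p_j$, $q_i$, $f|_{K_j}$, $f|_{V_i}$ by a bounded scalar factor. Consequently, if $f_r=\Theta(\bp_r,\bq_r)$ for $r=1,2$ and $\alpha,\beta\in\R$, a direct inspection shows that $\alpha f_1+\beta f_2$ is a fixed point of $\Phi(\alpha\bp_1+\beta\bp_2)(\alpha\bq_1+\beta\bq_2)(\bs)(\bt)$. Uniqueness of the fixed point of a contractive RB operator then forces $\Theta(\alpha\bp_1+\beta\bp_2,\alpha\bq_1+\beta\bq_2) = \alpha\Theta(\bp_1,\bq_1)+\beta\Theta(\bp_2,\bq_2)$.

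\textbf{Construction of the inverse.} I would define $\Psi:\sfB(X,\sfY)\to\sfB^m_\sfY\times\sfB^n_\sfY$ by
\[
\Psi(f) := \bigl((f\circ b_j - s_j\cdot f|_{K_j})_{j\in\N_m},\; (f\circ u_i - t_i\cdot f|_{V_i})_{i\in\N_n}\bigr).
\]
Each coordinate is bounded, because $f$ is bounded and $\bs,\bt$ are tuples of bounded scalar functions, and $\Psi$ is visibly linear. To see $\Psi=\Theta^{-1}$: if $f=\Theta(\bp,\bq)$, then the pointwise equivalent form of the self-referential equation stated at the end of the preceding theorem, namely $f\circ b_j = p_j + s_j\cdot f|_{K_j}$ on $K_j$ and $f\circ u_i = q_i + t_i\cdot f|_{V_i}$ on $V_i$, immediately gives $\Psi(f)=(\bp,\bq)$. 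Conversely, given an arbitrary $f\in\sfB(X,\sfY)$ and setting $(\bp,\bq):=\Psi(f)$, these same relations hold tautologically; precomposing with $b_j^{-1}$ and $u_i^{-1}$ exhibits $f$ as a solution of the fixed-point equation $\Phi(\bp)(\bq)(\bs)(\bt)f=f$, whence $\Theta(\bp,\bq)=f$ by uniqueness.

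\textbf{Expected main obstacle.} The only point of genuine care is the handling of boundary overlaps $\partial b_j(K_j)\cap \partial b_{j'}(K_{j'})$, where several characteristic functions in \eqref{eq3.4} can simultaneously equal one. Conditions (P1) and (P2) of the General Setup confine such overlaps to topological boundaries, while the interiors $b_j(\inter{K}_j)$ and $u_i(\inter{V}_i)$ exhaust $K$ and $X\setminus K$ respectively. I would therefore verify the identity $\Phi(\Psi(f))\,f=f$ first on these interiors, where precisely one characteristic function contributes and the identity reduces to the tautology $p_j + s_j\cdot f|_{K_j} = f\circ b_j$ produced by the very definition of $\Psi$; propagation to the remaining points uses the same partition argument that underlies the well-posedness of the RB operator itself.
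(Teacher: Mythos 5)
Your proposal is correct and follows essentially the same route as the paper: linearity via the joint linearity of \eqref{eq3.4} in $(\bp,\bq,f)$ plus uniqueness of the fixed point, and the explicit formula $p_j := f\circ b_j - s_j\cdot f$, $q_i := f\circ u_i - t_i\cdot f$, which is exactly the paper's surjectivity construction. Your only (minor, and arguably cleaner) organizational difference is packaging that formula as a two-sided inverse $\Psi$, so that injectivity and surjectivity are obtained simultaneously rather than injectivity being read off separately from the uniqueness of the fixed point.
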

\begin{proof}
Let $\alpha, \beta \in\R$, let $\bp, \widetilde{\bp}\in\sfB^m_\sfY$, and $\bq, \widetilde{\bq}\in\sfB^n_\sfY$. 
\ml
Injectivity follows immediately from the fixed point equation \eqref{3.4} and the uniqueness of the fixed point: $(\bp, \bq) = (\wbp,\wbq)$ $\Longleftrightarrow$ $f(\bp,\bq) = f(\wbp,\wbq)$.
\ml Linearity in $(\bp,\bq)$ follows from \eqref{3.4}, the uniqueness of the fixed point, and injectivity: 
\begin{align*}
f(\alpha(\bp,\bq) + \beta (\wbp,\wbq)) & = \sum_{j=1}^m (\alpha p_j + \beta \widetilde{p}_j)\circ b_j^{-1} \,\chi_{b_j (K_j)} + \sum_{i=1}^n (\alpha q_j + \beta \widetilde{q}_j)\circ u_i^{-1} \,\chi_{u_i (V_i)}\\
& \quad + \sum_{j=1}^m (s_j\circ b_j^{-1})\cdot (f_{\Phi,j} (\alpha\bp + \beta \wbp)(\alpha\bq + \beta\wbq)\circ b_j^{-1})\;\chi_{b_j(K_j)}\\
& \quad + \sum_{i=1}^n (t_i\circ u_i^{-1})\cdot (f_{\Phi,i}(\alpha\bp + \beta \wbp)(\alpha\bq + \beta\wbq)\circ u_i^{-1})\;\chi_{u_i(V_i)}
\end{align*}
and
\begin{align*}
\alpha f(\bp,\bq) + \beta f(\wbp,\wbq) & = \sum_{j=1}^m (\alpha p_j + \beta \widetilde{p}_j)\circ b_j^{-1} \,\chi_{b_j (K_j)}\\
& + \sum_{i=1}^n (\gamma q_j + \delta \widetilde{q}_j)\circ u_i^{-1} \,\chi_{u_i (V_i)}\\
& + \sum_{j=1}^m (s_j\circ b_j^{-1})\cdot (\alpha f_{\Phi,j}(\bp)(\bq)+\beta f(\wbp)(\wbq))\circ b_j^{-1}\;\chi_{b_j(K_j)}\\
&  + \sum_{i=1}^n (t_i\circ u_i^{-1})\cdot (\alpha f_{\Phi,j}(\bp)(\bq)+\beta f(\wbp)(\wbq))\circ u_i^{-1}\;\chi_{u_i(V_i)}
\end{align*}
Hence, $f(\alpha(\bp,\bq) + \beta (\wbp,\wbq)) = \alpha f(\bp,\bq) + \beta f(\wbp,\wbq)$.
\ml
For surjectivity, we define $p_j := f\circ b_j - s_j \cdot f$, $j\in \N_m$ and $q_i := f\circ u_i - t_i \cdot f$, $i\in \N_n$. Since $f\in \sfB(X,\sfY)$, we have $\bp\in\sfB^m_\sfY$ and $\bq\in\sfB^n_\sfY$. Thus, $f(\bp,\bq) = f$.
\end{proof}
\noindent
We denote the image of $\sfB^m_\sfY\times\sfB^n_\sfY$ under $\Theta$ by $\fF_{m,n}(X, \sfY)$ and remark that $\fF_{m,n}(X, \sfY)$ is an $\R$-vector space.

Consider now the special case $X := \R =: \sfY$ and suppose that $\bp$ and $\bq$ are tuples of polynomials. Set $\ord\bp := \displaystyle{\sum_{j=1}^m \ord p_j}$  and $\ord\bq := \displaystyle{\sum_{i=1}^n \ord q_i}$, where $\ord$ denotes the order of a polynomial. Since each polynomial of order $d$ is uniquely determined by $d$ real values, there exits a canonical bijection between the set $\Pi_d$ of polynomials of order $d$ and $\R^{d}$. These observations imply the following corollary of Theorem \ref{thm3.3}. 

\begin{corollary}\label{cor1}
Suppose that $X := \R =: \sfY$ and that $\bp\in \displaystyle{\underset{j=1}{\overset{m}{\times}} \Pi_{\mu_j}}$ and $\bq\in\displaystyle{\underset{i=1}{\overset{n}{\times}} \Pi_{\nu_i}}$. Then there exists a linear isomorphism $\iota: \R^{\ord\bp}\times\R^{\ord\bq}\to \fF_{m,n}(X, \sfY)$. Moreover, $\dim \fF_{m,n}(X, \sfY) = \ord\bp + \ord\bq$.
\end{corollary}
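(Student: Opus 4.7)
My plan is to construct $\iota$ as the composition of two linear isomorphisms: a coordinate identification of polynomial tuples with a Euclidean space, and the restriction of the linear isomorphism $\Theta$ from Theorem \ref{thm3.3} to the subspace of polynomial tuples of the prescribed orders.

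First I would invoke the canonical linear isomorphism $\Pi_d \cong \R^d$ that assigns to a polynomial of order $d$ its tuple of coefficients (equivalently, its values at $d$ distinct nodes, as remarked in the paragraph preceding the corollary). Taking Cartesian products of these isomorphisms across the $m$ factors $\Pi_{\mu_j}$ and the $n$ factors $\Pi_{\nu_i}$, and using $\ord\bp = \sum_{j=1}^m \mu_j$ and $\ord\bq = \sum_{i=1}^n \nu_i$, yields a linear isomorphism
\[
\kappa: \R^{\ord\bp}\times\R^{\ord\bq} \;\longrightarrow\; \prod_{j=1}^m \Pi_{\mu_j} \times \prod_{i=1}^n \Pi_{\nu_i}.
\]

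Next I would apply Theorem \ref{thm3.3}: for fixed $\bs$ and $\bt$, the map $\Theta:(\bp,\bq)\mapsto f(\bp,\bq)$ is a linear isomorphism from $\sfB^m_\sfY\times\sfB^n_\sfY$ onto $\fF_{m,n}(X, \sfY)$. Restricting $\Theta$ to the image of $\kappa$, viewed as a finite-dimensional subspace of $\sfB^m_\sfY\times\sfB^n_\sfY$, produces a linear isomorphism onto the corresponding subspace of $\fF_{m,n}(X,\sfY)$, which the corollary implicitly denotes by the same symbol. Setting $\iota:=\Theta\circ\kappa$ then yields the desired map, and the dimension formula $\dim \fF_{m,n}(X, \sfY) = \ord\bp + \ord\bq$ follows at once from the existence of a linear isomorphism onto $\R^{\ord\bp+\ord\bq}$.

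The one technical point I expect to need care with is verifying that polynomial tuples $\bq$ actually sit inside $\sfB^n_\sfY$ so that Theorem \ref{thm3.3} applies as stated. On each compact $K_j$ this is automatic; on an unbounded component $V_i$, however, a nonconstant polynomial fails to be bounded. I would handle this either by restricting attention to constant $q_i$ on unbounded $V_i$, or else by reinterpreting $\sfB^n_\sfY$ in a suitably enlarged or weighted sense on which polynomial data remain admissible and the contraction estimate \eqref{estim} still yields a unique fixed point. Once this admissibility issue is resolved, the remainder of the argument is a purely formal composition of explicit linear isomorphisms.
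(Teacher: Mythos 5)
Your argument is essentially the paper's: the corollary is obtained by composing the canonical identification $\Pi_d\cong\R^d$ (taken factorwise across the $m+n$ polynomial factors) with the linear isomorphism $\Theta$ of Theorem \ref{thm3.3}, and the dimension count follows immediately. The admissibility issue you flag --- that a nonconstant polynomial $q_i$ on an unbounded component $V_i$ fails to lie in $\sfB(V_i,\sfY)$, so $\bq$ need not belong to $\sfB^n_\sfY$ as Theorem \ref{thm3.3} requires --- is genuine and is passed over in silence by the paper, so your proposed fix (restricting to bounded, e.g.\ constant, $q_i$ on the unbounded components, or reinterpreting $\sfB^n_\sfY$ in a weighted sense compatible with \eqref{estim}) is a necessary addendum rather than an optional refinement.
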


We remark that in the case when $\mu_j := d$, $j\in \N_m$, and $\nu_i := e$, $i\in \N_n$, the sets $\displaystyle{\underset{j=1}{\overset{m}{\times}} \Pi_{d}}$, respectively, $\displaystyle{\underset{i=1}{\overset{n}{\times}} \Pi_{e}}$ coincide with the set all piecewise polynomials on $\displaystyle{\bigcup_{j=1}^m} (j,K_j)$, respectively, $\displaystyle{\bigcup_{i=1}^n} V_i$.

The linear isomorphism $\iota: \R^{\ord\bp}\times\R^{\ord\bq}\to \fF_{m,n}(X, \sfY)$ allows the construction of a basis for $\fF_{m,n}(X, \sfY)$. To this end, choose in each $K_j$, respectively $V_i$, $\ord p_j$, respectively $\ord q_i$, many points. Denote the sets of these points by $X^j :=\{x^j_\kappa\st$ $\kappa\in \{1, \ldots, \ord p_j\}\}$, and $\Xi^i := \{\xi^i_\lambda\st$ $\lambda\in \{1, \ldots, \ord q_i\}$, respectively. Let 
\[
p_j = \sum_{\kappa = 1}^{\ord p_j} p_j(x_\kappa^j)\, L_\kappa^j
\]
be the Lagrange representation of $p_j$. Here $L_\kappa^j$ denotes the Lagrange interpolant. Similarly, one have
\[
q_i = \sum_{\lambda = 1}^{\ord q_i} q_i(\xi_\lambda^i)\, L_\lambda^i,
\]
with the appropriate interpretation of the symbols. Then, Theorem \ref{thm3.3} and Corollary \ref{cor1} imply the following representation of a bounded local fractal function $f$ in terms of its fractal Lagrange interpolants:
\[
f = \sum_{j=1}^m \sum_{\kappa = 1}^{\ord p_j} p_j(x_\kappa^j)\, \mathfrak{L}_\kappa^j + \sum_{i=1}^n \sum_{\lambda = 1}^{\ord q_i} q_i(\xi_\kappa^i)\, \mathfrak{L}_\lambda^i,
\]
where $\mathfrak{L}_{\bullet}^*$ denotes $j (L_{\bullet}^*, L_{\bullet}^*)$.

\section{Tensor Products of Bounded Local Fractal Functions with Unbounded Domains}\label{sec6}
In this section, we define the tensor product of bounded local fractal functions with unbounded domains thus extending the previous construction to higher dimensions.

For this purpose, we follow the notation and of the previous section, and assume that $\sfY$ be a Banach space, and that $X$, $\oX$, $K$, $\oK$, $\{K_j\}$, $\{\oK_j\}$, $\{V_i\}$, $\{\oV_i\}$, $\calH := \{b_j : K_j\to K \st j \in \N_m\}\cup \{u_i : V_i\to U_{\pi(i)} \st i\in \N_n\}$, and $\ocalH := \{\ob_j : \oK_j\to \oK \st j \in \N_m\}\cup \{\ou_i : \oV_i\to \oU_{\opi(i)} \st i\in \N_n\} $ are as in the General Setup.

Furthermore, we assume in addition that $(\sfY, \|\mydot\|_\sfY)$ is a \emph{Banach algebra}, i.e., a Banach space that is also an associate algebra for which multiplication is continuous: 
$$
\|y_1y_2\|_\sfY \leq \|y_1\|_\sfY\,\|y_2\|_\sfY, \quad\forall\,y_1,y_2\in \sfY. 
$$
Let $f\in \sfB(X,\sfY)$ and $\of\in \sfB(\oX,\sfY)$. The tensor product of $f$ with $\of$, written $f\otimes\of: X\times\oX\to \Y$, with values in $\sfY$ is defined by
\[
(f\otimes\of) (x,\ox) := f(x) \of(\ox),\quad\forall\,(x,\ox)\in X\times\oX.
\]
As $f$ and $\of$ are bounded, the inequality
\[
\|(f\otimes\of)(x,\ox)\|_{\sfY} = \|f(x)\of(\ox\|_{\sfY} \leq  \|f(x)\|_\sfY \, \|\of(\ox)\|_\sfY, 
\]
implies that $f\otimes\of$ is bounded. Under the usual addition and scalar multiplication of functions, the set
\[
\sfB(X\times\oX, \sfY) := \{f\otimes\of : X\times \oX\to \sfY \st \text{$f\otimes\of$ is bounded}\}
\]
becomes a complete metric space when endowed with the metric
\[
d(f\otimes\of, g\otimes\og) := \sup_{x\in X} \|f(x) - g(x)\|_\sfY + \sup_{\ox\in\oX} \|\of(\ox) - \og(\ox)\|_\sfY.
\]
Now let $\Phi: \sfB(X,\sfY)\to \sfB(X,\sfY)$ and $\oPhi: \sfB(\oX,\sfY)\to \sfB(\oX,\sfY)$ be contractive RB-operators of the form \eqref{RB}. We define the tensor product of $\Phi$ with $\oPhi$ to be the RB-operator $\Phi\otimes\oPhi: \sfB(X\times\oX, \sfY)\to \sfB(X\times\oX, \sfY)$ given by
\[
(\Phi\otimes\oPhi)(f\otimes\of) := (\Phi f)\otimes (\oPhi\, \of).
\]
It follows that $\Phi\otimes\oPhi$ maps bounded functions to bounded functions. Furthermore, $\Phi\otimes\oPhi$ is contractive on the complete metric space $(\sfB(X\times\oX, \sfY),d)$. To see this, note that
\begin{align*}
\sup_{x\in X}\|(\Phi f)(x) &- (\Phi g)(x)\|_\sfY + \sup_{\ox\in \oX}\|(\Phi \of)(\ox) - (\Phi \og)(\ox)\|_\sfY \\
& \leq \ell \sup_{x\in X}\|f(x) - g(x)\|_\sfY + \widetilde{\ell} \sup_{\ox\in \oX} \|\of(\ox) - \og(\ox)\|_\sfY\\
& \leq \max\{\ell, \widetilde{\ell}\}\, d(f\otimes\of, g\otimes\og),
\end{align*}
where we used \eqref{estim} and denoted the uniform contractivity constant of $\oPhi$ by $\widetilde{\ell}$.

The unique fixed point of the RB-operator $\Phi\otimes\oPhi$ will be called a \emph{tensor product bounded local fractal function with unbounded domain} and its graph a \emph{tensor product bounded local fractal surface over an unbounded domain}.
\section{Bochner--Lebesgue Spaces $L^p(\X,\sfY)$}\label{sec7}
We may construct local fractal functions on spaces other than $B(X,\sfY)$. (See also \cite{BHM1,mas14}.) In this section, we derive conditions under which local fractal functions over unbounded domains are elements of the Bochner-Lebesgue spaces $L^p (\X,\sfY)$ for $p>0$. 

To this end, assume that $X$ is a closed subspace of a Banach space $\sfX$ and that $\X :=(X,\Sigma, \mu)$ is a measure space. Recall that the Bochner-Lebesgue space $L^p (\X,\sfY)$, $1\leq p\leq \infty$, consists of all Bochner measurable functions $f:X\to \sfY$ such that
\[
\|f\|_{L^p(\X,\sfY)} := \left(\int_{X} \|f(x)\|_\sfY^p \,d\mu(x)\right)^{1/p} < \infty, \quad 1 \leq p < \infty,
\]
and
\[
\|f\|_{L^\infty(\X,\sfY)} := \esssup_{x\in X} \|f(x)\|_\sfY < \infty, \quad p = \infty.
\]
For $0 < p <1$, the spaces $L^p(\X,\sfY)$ are defined as above but instead of a norm, a metric is used to obtain completeness. More precisely, define $d_p : L^p(\X, \sfY)\times L^p(\X,\sfY)\to \R$ by
\[
d_p (f,g) := \|f - g\|_{\sfY}^p.
\]
Then $(L^p(\X,\sfY), d_p)$ becomes an $F$-space. (Note that the inequality $(a+b)^p \leq a^p + b^p$ holds for all $a,b\geq 0$.) For more details, we refer to \cite{A,rudin}.

\begin{theorem}\label{thm7}
Let $(\sfY,d_\sfY)$ be a Banach space and let $X$, $\{K_j\}$, $\{V_i\}$, and $\calH :=$ $ \{b_j : K_j\to K \st j \in \N_m\}\cup \{u_i : V_i\to U_{\pi(i)} \st i\in \N_n\} $ be as in the General Setup. Assume that $\X:=(X,\Sigma, \mu)$ is a measure space and that the families $\{b_j\}$ and $\{u_i\}$ are $\mu$-measurable diffeomorphisms. Further assume that $J_{b_j} := \sup\{\|D b_j^{-1} \|_{K_j}\} < \infty$ and $J_{u_i} := \sup\{\|D u_i^{-1} \|_{V_i}\} < \infty$, where $D$ denotes the derivative. Suppose $\bp\in \underset{j=1}{\overset{m}{\times}} L^p (K_j,\sfY)$, $\bq\in \underset{i=1}{\overset{n}{\times}} L^p(V_i,\sfY)$, $\bs\in\underset{j=1}{\overset{m}{\times}} L^p (K_j,\R)$, and $\bt\in \underset{i=1}{\overset{n}{\times}} L^p (V_i,\R)$. 

The operator $\Phi: L^p (\X,\sfY)\to \R^{X}$, $p\in (0,\infty]$, defined by \eqref{eq3.4} is well-defined and maps $L^p(\X, \sfY)$ into itself. Moreover, if 
\begin{equation*}
\begin{cases}
\displaystyle{\sum_{j=1}^m\, J_{b_j} \,\|s_j\|_{L^p(K_j,\R)}^p + \sum_{i=1}^n\, J_{u_i} \,\|t_i\|_{L^p(V_i,\R)}^p} < 1, & p\in (0,1);\\ \\
\left(\displaystyle{\sum_{j=1}^m\, J_{b_j} \,\|s_j\|_{L^p(K_j,\R)}^p + \sum_{i=1}^n\, J_{u_i} \,\|t_i\|_{L^p(V_i,\R)}^p}\right)^{1/p} < 1, & p\in[1,\infty);\\ \\
\max\{\|s_j\|_{L^\infty(K_j,\R)} : j \in \N_m\} + \max\{\|t_i\|_{L^\infty(V_i,\R)} : i\in \N_n\} < 1, & p = \infty,
\end{cases}
\end{equation*}
then $\Phi$ is contractive on $L^p (\X,\sfY)$. Its unique fixed point $f$ is called a \emph{fractal function of class $L^p (\X,\sfY)$ on the unbounded domain $X$}.
\end{theorem}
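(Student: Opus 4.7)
My plan is to establish, in sequence, that the operator $\Phi$ (i) sends $L^p(\X,\sfY)$ to itself, and (ii) is a contraction in the appropriate metric, after which the Banach fixed-point theorem produces the unique fixed point $f$. The three ranges $p\in(0,1)$, $p\in[1,\infty)$, and $p=\infty$ share a common backbone but invoke different sub-additivity tools: Minkowski's inequality for $p\geq 1$, the elementary inequality $(a+b)^p\leq a^p+b^p$ for $p<1$, and the sub-additivity of the essential supremum for $p=\infty$.

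The structural ingredient is the essentially disjoint cover $X=\bigcup_{j=1}^m b_j(K_j)\cup\bigcup_{i=1}^n u_i(V_i)$ guaranteed by conditions (P1)-(P2) of the General Setup; since the $b_j$ and $u_i$ are $\mu$-measurable diffeomorphisms, the pairwise intersections of interiors on the right are empty and their boundaries carry zero $\mu$-measure. This partition allows the splitting
\begin{align*}
\|\Phi f\|_{L^p(\X,\sfY)}^p &= \sum_{j=1}^m \int_{b_j(K_j)} \|(p_j+s_j\cdot f_j)\circ b_j^{-1}\|_\sfY^p\, d\mu \\
&\quad + \sum_{i=1}^n \int_{u_i(V_i)} \|(q_i+t_i\cdot f_i)\circ u_i^{-1}\|_\sfY^p\, d\mu,
\end{align*}
with the analogous $\esssup$ expression in the $p=\infty$ case. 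On each piece I would then apply the change of variables induced by the diffeomorphism $b_j$ (respectively $u_i$); by the inverse function theorem together with the hypothesis $\|Db_j^{-1}\|_{K_j}\leq J_{b_j}$, the resulting Jacobian factor is bounded above by $J_{b_j}$ (respectively $J_{u_i}$). Combining with the chosen sub-additivity inequality and the $L^p$-hypotheses on $\bp,\bq,\bs,\bt$ yields $\Phi f\in L^p(\X,\sfY)$.

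For the contractivity estimate, the $p_j\circ b_j^{-1}$ and $q_i\circ u_i^{-1}$ terms cancel in $\Phi f-\Phi g$, leaving only the multiplicative pieces $(s_j\circ b_j^{-1})\cdot((f_j-g_j)\circ b_j^{-1})\chi_{b_j(K_j)}$ and their $u_i$-counterparts. Applying the same partition-plus-change-of-variables argument to the $p$-th power of $d_p(\Phi f,\Phi g)$ produces a bound in which the quantities $J_{b_j}\|s_j\|_{L^p(K_j,\R)}^p$ and $J_{u_i}\|t_i\|_{L^p(V_i,\R)}^p$ appear additively, matching the expression in the theorem's hypothesis. Under that hypothesis the Lipschitz constant is strictly less than one, so $\Phi$ is a contraction on the complete metric space $(L^p(\X,\sfY), d_p)$ for $p\in(0,1)$ and on the Banach space $L^p(\X,\sfY)$ for $p\in[1,\infty]$; an application of Banach's fixed-point theorem then delivers the unique fixed point.

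The main technical obstacle is the careful bookkeeping of the change of variables, namely verifying that the Jacobian factor falling out of $\int_{b_j(K_j)}(\cdot)\,d\mu$ is genuinely controlled by $J_{b_j}$ (rather than its reciprocal) -- this is ultimately the inverse function theorem applied to $b_j$, but the direction of the inequality must be tracked with care. A secondary subtlety is that in the range $p\in(0,1)$ the contraction property must be verified with respect to the metric $d_p$ rather than via a norm-based estimate, since $L^p$ is not locally convex there; the $F$-space version of Banach's theorem still applies and delivers the fixed point.
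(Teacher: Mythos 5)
Your proposal follows essentially the same route as the paper's proof: decompose the integral over the essentially disjoint images $b_j(K_j)$ and $u_i(V_i)$ furnished by (P1)--(P2), note that the $p_j$ and $q_i$ terms cancel in $\Phi g-\Phi h$, change variables back to $K_j$ and $V_i$ with the Jacobian factor absorbed into $J_{b_j}$ and $J_{u_i}$, handle the three ranges of $p$ with the corresponding sub-additivity tool, and invoke the Banach fixed point theorem (in its $F$-space form for $p\in(0,1)$). The Jacobian-direction subtlety you flag as the main technical obstacle is not resolved in the paper either, which simply asserts the bound by $J_{b_j}$ after substitution, so your treatment is at the same level of rigor as the original.
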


\begin{proof}
Note that under the hypotheses on the functions $p_j$, $q_i$ and $s_j$, $t_i$ as well as the mappings $b_j$, $u_i$, $\Phi f$ is well-defined and an element of $L^p (\X, \sfY)$. It remains to be shown that under the stated conditions, $\Phi$ is contractive on $L^p (\X, \sfY)$. 

For this purpose, first consider the case $1\leq p<\infty$. If $g,h \in L^p (\X, \sfY)$ then
\begin{align*}
\|\Phi g - \Phi h\|_{L^p (\X,\sfY)}^{p} & = \int\limits_{X} \|\Phi g (x) - \Phi h (x)\|^p_{\sfY} d\mu(x)\\
& \leq \int\limits_{X} \left\|\sum_{j=1}^{m} (s_j\circ b_j^{-1}) [(g_j\circ b_j^{-1}) - (h_j\circ b_j^{-1})]\,\chi_{b_j(K_j)}\right\|^p_{\sfY}\;d\mu\\
& \quad + \int\limits_{X} \left\|\sum_{i=1}^{n} (t_i\circ u_i^{-1}) [(g_i\circ u_i^{-1}) - (h_i\circ u_i^{-1})]\,\chi_{u_i(V_i)}\right\|^p_{\sfY}\;d\mu\\
& \leq \sum_{j=1}^{m}\,J_{b_j} \int\limits_{K_j} \left|s_j \right|^p_{\R} \left\|g_j  - h_j\right\|^p_\sfY\,d\mu + 
\sum_{i=1}^{n}\,J_{u_i} \int\limits_{V_i} \left|t_i \right|^p_{\R} \left\|g_i  - h_i\right\|^p_\sfY\,d\mu\\
& \leq \left(\sum_{j=1}^m\, J_{b_j} \,\|s_j\|_{L^p(K_j,\R)}^p + \sum_{i=1}^n\, J_{u_i} \,\|t_i\|_{L^p(V_i,\R)}^p\right) \,\|g - h\|_{L^p(\X,\sfY)}.
\end{align*}
The case $0<p<1$ follows now in very much the same fashion. We again have after substitution and rearrangement 
\begin{align*}
d_p(\Phi g,\Phi h) & = \|\Phi g - \Phi h\|^p_{L^p(\X,\sfY)}\\
& \leq \sum_{j=1}^{m}\,J_{b_j} \int\limits_{K_j} \left|s_j \right|^p_{\R} \left\|g_j  - h_j\right\|^p_\sfY\,d\mu + 
\sum_{i=1}^{n}\,J_{u_i} \int\limits_{V_i} \left|t_i \right|^p_{\R} \left\|g_i  - h_i\right\|^p_\sfY\,d\mu\\
& \leq \left(\sum_{j=1}^m\, J_{b_j} \,\|s_j\|_{L^p(K_j,\R)}^p + \sum_{i=1}^n\, J_{u_i} \,\|t_i\|_{L^p(V_i,\R)}^p\right) \, d_p(g, h).
\end{align*}
Now let $p= \infty$. Then
\begin{align*}
\|\Phi g - \Phi h\|_{L^\infty(\X,\sfY)} & = \esssup_{x\in X} \|\Phi g(x) - \Phi h (x)\|_\sfY\\
& \leq \esssup_{x\in b_j(K_j)} \left\|\sum_{j=1}^m s_j\circ b_j^{-1}\cdot (g-h)\circ b_j^{-1})\right\|_\sfY \\
& \quad + \esssup_{x\in u_i(V_i)} \left\|\sum_{i=1}^n t_i\circ u_i^{-1}\cdot (g-h)\circ u_i^{-1})\right\|_\sfY \\
& \leq \left(\max\{\|s_j\|_{L^\infty(K_j,\R)} : j \in \N_m\} + \max\{\|t_i\|_{L^\infty(V_i,\R)} : i\in \N_n\}\right)\\
& \quad \times \|g - h\|_{L^\infty(\X,\sfY)}.
\end{align*}
These calculations prove the claims.
\end{proof}
\section{The Local IFS Associated With The RB Operator}\label{sec8}
In this section, we associate with the RB operator \eqref{RB} a local IFS and show that the graph of the unique fixed point of $\Phi$ is a local attractor of this local IFS.

To this end, let
\[
X_\ell := \begin{cases} K_\ell, & \ell\in \{1, \ldots, m\};\\
V_{\ell - m}, & \ell\in \{m+1, \ldots, m+n\}.
\end{cases}
\]
With the sets $X_\ell$ we associate continuous mappings $f_\ell: X_\ell\to X$ by setting
\[
f_\ell := \begin{cases} b_\ell, & \ell\in \{1, \ldots, m\};\\
u_{\ell - m}, & \ell\in \{m+1, \ldots, m+n\}.
\end{cases}
\]
In addition, define mappings $g_\ell:X_\ell\times Y\to Y$ by
\[
g_\ell := \begin{cases} v_\ell, & \ell\in \{1, \ldots, m\};\\
w_{\ell - m}, & \ell\in \{m+1, \ldots, m+n\},
\end{cases}
\]
and $h_\ell: X_\ell\times Y\to X_\ell\times Y$ by
\[
h_\ell(x,y) := (f_\ell (x), g_\ell(x,y)), \quad \ell\in \N_{m+n}.
\]
Assume that the functions $v_j$ and $w_i$ are continuous as functions $X\to Y$. Then the mappings $g_\ell$ and therefore the mappings $h_\ell$ are continuous. We define $\calH_{\loc} := \{h_\ell: X_\ell\times Y\to X_\ell\times Y\}_{\ell\in \N_{m+n}}$.

Hence, the pair $(X \times Y, \calH_{\loc})$ is a local IFS. As $X\times Y$ is locally compact, the set-valued mapping $\cF_{\loc}:2^{X\times Y}\to 2^{X\times Y}$, defined by
\[
\cF_{\loc} (S) := \bigcup_{\ell\in \N_{m+n}} h_\ell (S\cap (X_\ell\times Y)),
\]
is continuous \cite[Theorem 1]{bm}.

\begin{proposition}
The graph $G$ of the fixed point $f$ of the RB operator \eqref{RB} is an attractor of the local IFS $(X \times Y, \calH_{\loc})$.
\end{proposition}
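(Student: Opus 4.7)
The plan is to verify the fixed-point identity $G = \cF_{\loc}(G)$ directly from the self-referential equation satisfied by $f$, combined with the covering properties (P1) and (P2) of the General Setup. Recall that $G = \{(x,f(x)) : x\in X\}$ and $\cF_{\loc}(G) = \bigcup_{\ell\in\N_{m+n}} h_\ell(G\cap (X_\ell\times Y))$.

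First, I would compute $h_\ell(G\cap (X_\ell\times Y))$ piece by piece. For $\ell\in\N_m$ we have $X_\ell = K_\ell$, $f_\ell = b_\ell$, $g_\ell = v_\ell$, so
\[
h_\ell\bigl(G\cap (K_\ell\times Y)\bigr) = \{(b_\ell(x), v_\ell(x,f(x))) \st x\in K_\ell\}.
\]
Applying the self-referential equation from Theorem \ref{thm2} to the point $b_\ell(x)\in b_\ell(K_\ell)$ and using (P1) (interiors of the $b_j(K_j)$ are pairwise disjoint, so on the interior of $K_\ell$ only the $\ell$-th summand contributes), one gets $f(b_\ell(x)) = v_\ell(x, f_\ell(x)) = v_\ell(x,f(x))$. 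Hence
\[
h_\ell\bigl(G\cap (K_\ell\times Y)\bigr) = \{(b_\ell(x), f(b_\ell(x))) \st x\in K_\ell\} = G\cap (b_\ell(K_\ell)\times Y).
\]
Exactly the same computation with $u_{\ell-m}$ and $w_{\ell-m}$ in place of $b_\ell$ and $v_\ell$ handles $\ell\in\{m+1,\ldots,m+n\}$ and yields $h_\ell(G\cap (V_{\ell-m}\times Y)) = G\cap (u_{\ell-m}(V_{\ell-m})\times Y)$.

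Taking the union over all $\ell$ and invoking (P1)--(P2), which together give $X = \bigcup_{j\in\N_m} b_j(K_j) \cup \bigcup_{i\in\N_n} u_i(V_i)$, one obtains
\[
\cF_{\loc}(G) = \bigcup_{j=1}^m G\cap (b_j(K_j)\times Y) \;\cup\; \bigcup_{i=1}^n G\cap (u_i(V_i)\times Y) = G.
\]
This verifies the local attractor condition \eqref{attr}.

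The only potential subtlety is that the values of $f$ on the boundary points where two pieces $b_j(K_j)$ and $b_{j'}(K_{j'})$ (or similarly $u_i(V_i)$ and $u_{i'}(V_{i'})$) meet must be consistent; however, since $f$ is a single well-defined function on $X$, the graph is automatically consistent regardless of which representation one chooses for a boundary point, and the equality of sets above holds without modification. Thus the argument is essentially a transcription of the self-referential equation into graph form, and the proof is short once this dictionary is set up.
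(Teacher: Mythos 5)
Your proposal is correct and follows essentially the same route as the paper's own proof: compute $\cF_{\loc}(G)$ piecewise, use the self-referential equation to replace $v_j(x,f(x))$ by $f(b_j(x))$ and $w_i(x,f(x))$ by $f(u_i(x))$, and then invoke the covering of $X$ by the images $b_j(K_j)$ and $u_i(V_i)$ to recover $G$. Your remark on consistency at the boundary points is a minor extra precaution not spelled out in the paper, but the argument is otherwise the same.
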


\begin{proof}
We have
\begin{align*}
\cF_{\loc} (G) &= \bigcup_{\ell\in \N_{m+n}} h_\ell (G\cap (X_\ell\times Y)) =  \bigcup_{\ell\in \N_{m+n}} h_\ell (
\{(x, f (x)) \st x\in X_\ell\})\\
&= \bigcup_{\ell\in \N_{m+n}} \{(f_\ell (x), g_\ell(x,f (x))) \st x\in X_\ell\}\\
&= \bigcup_{j\in \N_{m}} \{(b_j(x), v_j(x,f(x))) : x\in K_j\} \cup \bigcup_{i\in \N_{n}} \{(u_i(x) ,w_i(x,f(x))) : x\in  V_i\}\\
&= \bigcup_{j\in \N_{m}} \{(b_j(x), f(b_j(x))) : x\in K_j\} \cup \bigcup_{i\in \N_{n}} \{(u_i(x),f(u_i(x))) : x\in  V_i\}\\
&= \bigcup_{j\in \N_{m}} \{(x, f(x)) : x\in b_j(K_j)\} \cup \bigcup_{i\in \N_{n}} \{(x,f(x)) : x\in u_i(V_i)\}\\
&= \bigcup_{\ell\in \N_{m+n}} \{(x, f(x)) : x\in f_\ell(X_\ell)\} = G.\qedhere
\end{align*}
\end{proof}

\end{document}